\theoremstyle{plain}
\newtheorem{lem}{Lemma}[section]
\newtheorem{thm}[lem]{Theorem}
\newtheorem{cor}[lem]{Corollary}
\newtheorem{prop}[lem]{Proposition}
\newtheorem{defn}[lem]{Definition}
\theoremstyle{definition}
\newtheorem{rem}[lem]{Remark}
\newtheorem{ex}[lem]{Example}
\newcommand{\R}{\mathbb{R}}
\newcommand{\Z}{\mathbb{Z}}
\newcommand{\N}{\mathbb{N}}
\newcommand{\Qc}{\mathcal{Q}}
\def\e{\varepsilon}
\begin{document}

\title{Dual numbers, weighted quivers, and 
extended Somos and Gale-Robinson sequences}

\author{Valentin Ovsienko} \author{Serge Tabachnikov}

\address{
Valentin Ovsienko,
CNRS,
Laboratoire de Math\'ematiques 
U.F.R. Sciences Exactes et Naturelles 
Moulin de la Housse - BP 1039 
51687 REIMS cedex 2,
France}

\address{
Serge Tabachnikov,
Pennsylvania State University,
Department of Mathematics,
University Park, PA 16802, USA,
}

\email{valentin.ovsienko@univ-reims.fr
tabachni@math.psu.edu}



\begin{abstract}
We investigate a general method that allows one
to construct new integer sequences extending existing ones.
We apply this method to the classic Somos-4 and Somos-5,
and the Gale-Robinson sequences,
as well as to more general class of sequences introduced by Fordy and Marsh,
and produce a great number of new sequences.
The method is based on the notion of ``weighted quiver'',
a quiver with a $\Z$-valued function on the set of vertices
that obeys very special rules of mutation.
\end{abstract}

\maketitle

\thispagestyle{empty}

\section{Introduction}

A  {\it dual number} is a pair of real numbers $a$ and $b$, written in the form
$a+b\e$, subject to the condition that $\e^2=0$. 
Dual numbers form a commutative algebra,
they were introduced by Clifford in 1873, and since then have found applications
in geometry and mathematical physics. For example, according to E. Study, the 
space of oriented lines in $\R^3$ is the unit sphere in the 3-dimensional space over dual numbers \cite{Stu}. 
 Surprisingly, dual numbers are not frequent guests in number theory
 and combinatorics. In this paper, we will use dual numbers 
 to construct a large family of integer sequences.

Let $(a_n)_{n\in\N}$ be an integer sequence defined by some recurrence
and initial conditions.
We will consider a pair of sequences,
$(a_n)_{n\in\N}$ and $(b_n)_{n\in\N}$, organized as a sequence of dual numbers:
\begin{equation}
\label{DualN}
A_n:=a_n+b_n\e.
\end{equation} 
We assume that $(A_n)_{n\in\N}$ satisfies
either exactly the same recurrence as $(a_n)_{n\in\N}$,
or its certain deformation.
We furthermore choose the same
initial conditions for $(a_n)_{n\in\N}$ and arbitrary initial conditions for $(b_n)_{n\in\N}$.
We show that, in many interesting cases, the
sequence~$(b_n)_{n\in\N}$, defined in this way, is an integer sequence.
This method was suggested in~\cite{SCl}, and tested on the  
Somos-4 sequence, producing new integer sequences.

\subsection{Somos, Gale-Robinson and beyond}\label{SeqSect}
Let us give a brief overview of the integer sequences
that we will consider.

The Somos-4 and Somos-5 sequences are
the sequences of integers, $(a_n)_{n\in\N}$, defined by the recurrences:
$$
a_{n+4}a_n=a_{n+3}a_{n+1}+a_{n+2}^2
\qquad\hbox{and}\qquad
a_{n+5}a_n=a_{n+4}a_{n+1}+a_{n+3}a_{n+2},
$$
and the initial conditions: $a_1=a_2=a_3=a_4=1$ 
and $a_1=a_2=a_3=a_4=a_5=1$, respectively.
These sequences were discovered by Michael Somos in 80's,
and they are discrete analogs of elliptic functions.
Their integrality, observed and conjectured by Somos, was
later proved by several authors; for a historic account see~\cite{Gal}.

A more general class of integer sequences generalizing the
Somos sequences, called the three term Gale-Robinson sequences,
are defined by the recurrences:
\begin{equation}
\label{GRGen}
a_{n+N}a_n=a_{n+N-r}a_{n+r}+a_{n+N-s}a_{n+s},
\end{equation}
where $1\leq{}r<s\leq{}\frac{N}{2}$,
and the initial conditions $(a_1,\ldots,a_N)=(1,\ldots,1)$.
Their integrality was proved in~\cite{FZ2}.
A combinatorial proof was then given in~\cite{BPW} and~\cite{Spe},
a proof that explicitly uses quiver mutations and cluster algebras
was presented in~\cite{FM}.

A large class of integer sequences generalizing those of Gale-Robinson
was introduced in~\cite{FM} (see also~\cite{Mar}).
The recurrence is of the general form
\begin{equation}
\label{FMRec}
a_{n+N}a_n=P(a_{n+N-1},\ldots,a_{n+1}),
\end{equation}
where the polynomial~$P$ is a sum of two monomials:
$P=P_1+P_2$.
The initial conditions are those of Gale-Robinson.
A simple example of such sequences is:
$$
a_{n+4}a_n=a_{n+3}^pa_{n+1}^p+a_{n+2}^q,
$$
with arbitrary positive integers $p,q$.
Note that this particular sequence was already considered in~\cite{Gal},
where their integrality was clamed.

The method of~\cite{FM} is based on the Fomin-Zelevinsky
{\it Laurent phenomenon}~\cite{FZ2} and on the notion of
{\it period 1 quiver}, i.e.,
a quiver that rotates under mutations.

\subsection{Extensions}\label{WExSect}

The same arguments show that, for $A_n=a_n+b_n\e$ satisfying the recurrence
$$
A_{n+N}A_n=P(A_{n+N-1},\ldots,A_{n+1}),
$$
where $P$ is as in~\cite{FM}, and
$(a_1,\ldots,a_N)=(1,\ldots,1)$,
the sequence~$(b_n)_{n\in\N}$ is integer for an arbitrary choice of integral initial conditions
$(b_1,\ldots,b_N)$.
In fact, this is a direct consequence of the classic
Laurent phenomenon.

Note that the recurrence for~$(b_n)_{n\in\N}$ obtained in this way, is nothing other than
the {\it linearization} of~(\ref{FMRec}).
This linearization procedure already provides a large number of new integer sequences.

We will also consider the recurrences of the following form:
\begin{equation}
\label{GenExtF}
A_{n+N}A_n=P_1+P_2\left(1+w\e\right)
\qquad\hbox{and}\qquad
A_{n+4}A_n=P_1\left(1+w\e\right)+P_2,
\end{equation}
where $w$ is an arbitrary integer,
and $P_i$ stands for $P_i(A_{n+N-1},\ldots,A_{n+1})$, $i=1,2$.
In this case,~$(b_n)_{n\in\N}$ satisfies a {\it non-linear} recurrence.
More precisely, the recurrence for~$(b_n)_{n\in\N}$ is given by an affine function
with polynomial in~$(a_n)_{n\in\N}$ coefficients.

We show, in particular, that these non-linear extensions of the Gale-Robinson sequences
are always integer.
We give a sufficient condition for the corresponding period~1 quiver
that guaranties that the extensions of the form~(\ref{GenExtF}) 
generates an integer sequence $(b_n)$.
Integrality of the sequences defined by~(\ref{GenExtF})
is a consequence of a version of the Laurent phenomenon
proved in~\cite{SCl} for cluster superalgebras.

\subsection{Weighted quivers}\label{WeSect}
Our main tool is what we call
a weighted quiver.
This is a usual quiver~$\Qc$
(without $1$- or $2$-cycles), together with a function
$w:\Qc_0\to\Z$ on the set of vertices.
Quiver mutations for such weight functions are defined as follows.
Label the vertices by positive integers $1,\ldots,n$,
the mutation~$\mu_k$ at $k$th vertex sends $w$ to
the new function~$\mu_k(w)$ defined by:
$$
\begin{array}{rcll}
\mu_k(w)(i) &=& w(i)+[b_{ki}]_+w(k), & i\not={}k,\\[4pt]
\mu_k(w)(k) &=& -w(k),&
\end{array}
$$
where $[b_{ki}]_+$ is the number of arrows from the vertex
$k$ to the vertex $i$, and if the vertices are oriented from $i$ to $k$, then
$[b_{ki}]_+=0$.
The exchange relations are also modified.

Let us mention that the mutation rule of the weight function
that we use (but not the exchange relations), have already been introduced 
by several authors; see formula~(2.3) of~\cite{GHK} and~\cite{Gal1},\cite{Gal2}.
It would be interesting to investigate the relations of our work with these papers.

We classify the period~$1$ quivers (in the sense of Fordy-Marsh~\cite{FM})
that have a period~$1$ weight function.

\section{Extensions of the Somos-$4$ sequence}\label{S4Sect}

We start with the extensions of the Somos-4 sequence,
briefly considered in~\cite{SCl}.
The goal of this section is to show that
the new sequences arising in this way have nice properties.
This section is based on a computer program written by Michael Somos,
to whom we are most grateful.
It can be considered as a motivation for the rest of the paper.

\subsection{Linearization}

Consider first the recurrence
\begin{equation}
\label{S4ExtL}
A_{n+4}A_n=A_{n+3}A_{n+1}+A_{n+2}^2,
\end{equation}
where $A_n=a_n+b_n\e$ as in~(\ref{DualN}).
The sequence $(a_n)_{n\in\N}$ is then the Somos-4 sequence:
$$
a_n = 1, 1, 1, 1, 2, 3, 7, 23, 59, 314, 1529, 8209, 83313, 620297, 7869898, \ldots
$$
while the sequence $(b_n)_{n\in\N}$ satisfies:
$$
b_{n+4}a_n+b_na_{n+4}=
a_{n+1}b_{n+3}+
2a_{n+2}b_{n+2}+a_{n+3}b_{n+1},
$$
which is the {\it linearization} of
the Somos-4 recurrence.

The space of solutions of the linearized system is a four-dimensional vector space.
Every sequence $(b_n)_{n\in\N}$ satisfying this recurrence is a linear
combination of the sequences with one of the following initial
conditions:
$$
(b_1,b_2,b_3,b_4)\quad=\quad
(1,0,0,0),
\quad
(0,1,0,0),
\quad
(0,0,1,0),
\quad\hbox{or}\quad
(0,0,0,1).
$$
Let us denote these sequences by
$(b^1_n)_{n\in\N},(b^2_n)_{n\in\N},(b^3_n)_{n\in\N},(b^4_n)_{n\in\N}$,
respectively.

We have the following properties:
\begin{enumerate}
\item
The four  sequences $(b^1_n)_{n\in\N},(b^2_n)_{n\in\N},(b^3_n)_{n\in\N},(b^4_n)_{n\in\N}$ are integer.
\item
The Somos-4 sequence is their sum, i.e.,
$$
a_n=b^1_n+b^2_n+b^3_n+b^4_n,
$$ 
for every $n$.
\end{enumerate}

Integrality of each of the four sequences $(b_n^i)_{n\in\N}$
follows from the classic Laurent phenomenon.
Indeed, $A_n$ with $n\geq5$ is a Laurent polynomial
in $A_1,A_2,A_3,A_4$.
However, the denominators of this Laurent polynomial
are monomials in $a_1,a_2,a_3,a_4$,
while $b_1,b_2,b_3,b_4$ enter (linearly) into the numerators.
Indeed, since $\e^2=0$, one has
$$
\frac{1}{a+b\e}=
\frac{1}{a}-\frac{b}{a^2}\e.
$$

Property (2) follows from the fact that the
sequences $(b_n)_{n\in\N}$ satisfying the recurrence~(\ref{S4ExtL})
form a vector space, and the initial condition of the sum
$(b^1_n+b^2_n+b^3_n+b^4_n)_{n\in\N}$ is precisely $(1,1,1,1)$,
i.e., that of $(a_n)_{n\in\N}$.
Property (2) means that the sequences 
$(b^1_n)_{n\in\N},(b^2_n)_{n\in\N},(b^3_n)_{n\in\N},(b^4_n)_{n\in\N}$ 
provide a canonical
way to decompose the Somos-4 sequence with respect to the initial conditions.

Note, however, that the sequences $(b^1_n)_{n\in\N}$ and $(b^2_n)_{n\in\N}$
are not positive, the first values being:
$$
\begin{array}{rcllllllllllllll}
b^1_n &= &1,&0,&0,&0, &-2, &-2, &-10,& -46, &-103, &-933,& -4681,& -27912, &-375536,&\ldots\\[2pt]
b^2_n &= &0,&1,&0,&0, &1,&-2,&2,&-1,&-40,&140,&-696,&-265,&38478,&\ldots\\[2pt]
b^3_n &= &0,&0,&1,&0, &2,&4,&5,&48,&94,&635,&4732,&18594,&299835,&\ldots\\[2pt]
b^4_n &= &0,&0,&0,&1,&1,&3,&10,&22,&108,&472,&2174,&17792,&120536,&\ldots
\end{array}
$$
It would be interesting to understand the properties of these sequences.
Computer calculations show that $b^1_n<0$, for every $n\leq100$.
The sequence $(b^2_n)_{n\in\N}$ becomes positive for $n\geq15$
(this checked for~$n\leq100$).
Conjecturally, the sequence $(b^3_n)_{n\in\N}$ is positive.
The sequence $(b^4_n)_{n\in\N}$ is positive for~$n\leq25$,
however, $b^4_n<0$, for $26\leq{}n\leq100$.

\subsection{Two non-linear extensions}

Consider now the
extensions of the Somos-4 sequence, satisfying the recurrences:
$$
\begin{array}{rcl}
A_{n+4}A_n &= &A_{n+3}A_{n+1}+A_{n+2}^2\left(1+w\e\right)\\[4pt]
A_{n+4}A_n &=& A_{n+3}A_{n+1}\left(1+w\e\right)+A_{n+2}^2,
\end{array}
$$
where $w\in\Z$ is an arbitrary (fixed) integer.
The corresponding recurrences for $(b_n)_{n\in\N}$ are non-linear:
\begin{eqnarray}
\label{S4ExtNL1}
b_{n+4}a_n&=&a_{n+1}b_{n+3}+
2a_{n+2}b_{n+2}+a_{n+3}b_{n+1}-b_na_{n+4}+w\,a_{n+2}^2,\\[2pt]
\label{S4ExtNL2}
b_{n+4}a_n&=&a_{n+1}b_{n+3}+
2a_{n+2}b_{n+2}+a_{n+3}b_{n+1}-b_na_{n+4}+w\,a_{n+3}a_{n+1},
\end{eqnarray}
respectively, where $(a_n)_{n\in\N}$ is the initial Somos-4 sequence.
It was proved in~\cite{SCl} that, for any choice of integral initial conditions, 
the sequence~$(b_n)_{n\in\N}$ satisfying~(\ref{S4ExtNL1}) or~(\ref{S4ExtNL2}) is integer.

The choice of zero initial conditions:
\begin{equation}
\label{ZeroIC}
(b_1,b_2,b_3,b_4)=(0,0,0,0)
\end{equation}
is now the most natural one.
Indeed, any solution $(b_n)$ 
of each of the recurrences~(\ref{S4ExtNL1}) and~(\ref{S4ExtNL2}) is the sum of the solution
with the initial condition~(\ref{ZeroIC})
and a solution of the linear recurrence~(\ref{S4ExtL}).
In other words, the solutions of~(\ref{S4ExtNL1}) and~(\ref{S4ExtNL2})
form an affine subspace.

Choosing $w=1$,
the sequences $(b_n)_{n\in\N}$ defined by the recurrences~(\ref{S4ExtNL1}) and~(\ref{S4ExtNL2})
and zero initial conditions~(\ref{ZeroIC}) start as follows:
$$
\begin{array}{rcl}
b_n &=&
0,0,0,0,1,2,10,48,160,1273,7346,51394,645078,5477318,87284761\ldots\\[4pt]
b_n &=&
0,0,0,0,1,3,10,59,198,1387,9389,57983,752301,6851887,97297759\ldots
\end{array}
$$
respectively.
We conjecture the positivity of these sequences, and we checked it for $n\leq100$.
Furthermore, conjecturally, both of the above sequences grow faster than $(a_n)_{n\in\N}$.

\section{Weighted quivers, mutations and exchange relations}\label{WeightSect}

In this section, we introduce the notion of weighted quiver.
We then describe the mutation rules of such objects,
extending the usual mutation rules of quivers.
We also describe the modified exchange relations for 
weighted quivers,
and formulate the corresponding Laurent phenomenon.

The notion of weighted quiver
is equivalent to the ``simplified version'' of cluster superalgebra
with two odd variables~\cite{SCl}.

\subsection{Mutation rules}
A quiver $\Qc$ is an oriented finite graph with vertex set 
$\Qc_0$ and the set of arrows $\Qc_1$.
Usually, the vertices of $\Qc$ will be labeled by the
letters $\{x_1,\ldots,x_N\}$, where $N=|\Qc_0|$,
considered as formal variables.

In~\cite{FZ1}, Fomin and Zelevinsky
defined the rules of mutation of a quiver, under the assumption
that $\Qc$ has no $1$-cycles and $2$-cycles.
The structure of $\Qc$ can then be represented as an $N\times{}N$-skew-symmetric
matrix $(b_{ij})$, where
$b_{ij}$ is the number of arrows between the vertices $x_i$ and $x_j$.
Note that the sign of $b_{ij}$ depends on the orientation:
$b_{ij}>0$ if the arrows are oriented from~$x_i$ to~$x_j$ and negative otherwise.

The mutation of the quiver $\mu_k:\Qc\to\Qc'$
at vertex $x_k$ is defined by the following three rules:
\begin{itemize}
\item for every path $(x_i\rightarrow x_k \rightarrow x_j)$ in $\Qc$, add an arrow $(x_i\rightarrow x_j)$;
\item reverse all the arrows incident with $x_k$;
\item remove all 2-cycles created by the first rule.
\end{itemize}

\begin{defn}
\label{TheMainDef}
We call a {\it weighted quiver} a quiver $\Qc$ with a function
$$
w:\Qc_0\to\Z.
$$
The function $w$ associates to every variable $x_i$ its {\it weight},
$w_i:=w(x_i)$.

The mutation $\mu_k(w)$ of the weight function $w$ is performed according to the
following two rules:
\begin{enumerate}
\item for every arrow $(x_k\rightarrow x_i)$, change the value $w_i$ to $w_i+w_k$.
In other words, the new weight function $\mu_k(w)$ is defined by
$$
\mu_k(w)(x_i):=w_i+[b_{ki}]_+w_k,
$$
where 
$$
[b_{ki}]_+=\left\{
\begin{array}{l}
b_{ki},\; \hbox{if}\; b_{ki}\geq0,\\
0,\;\hbox{otherwise}
\end{array}
\right.
$$ 
for all $i\not=k$;
\item reverse the sign of $w_k$, {\it i.e.}, 
$$
\mu_k(w)(x_k):=-w_k.
$$
\end{enumerate}
\end{defn}

\subsection{Exchange relations}
Recall that the mutation $\mu_k$ of the quiver $\Qc$
replaces the variable $x_k$ by the new 
function~$x_k'$ defined by the formula:
$$
x_kx'_k=\prod\limits_{\substack{x_k\to x_j}}\;x_j
 \quad+\quad 
\prod\limits_{\substack{x_i\to x_k}}\; x_i,
$$
where the products are taken over the set of arrows
$(x_i\rightarrow x_k)\in\Qc_1$ and $(x_k\to x_j)\in\Qc_1$, respectively
(with fixed $k$).
The above formula is
called the \textit{exchange relation}.
The {\it Laurent phenomenon}, proved in~\cite{FZ1},
states that every (rational) function obtained by a series of
mutations is actually a Laurent polynomial in the initial variables
$\{x_1,\ldots,x_N\}$.

Given a weighted quiver $(\Qc,w)$,
we assume that the vertices are labeled by the variables 
$\{X_1,\ldots,X_N\}$ written as dual numbers:
$$
X_i=x_i+y_i\e,
$$
where $x_i$ and $y_i$ are the usual commuting variables.
The exchange relations are defined as follows.

\begin{defn}
The mutation $\mu_k$ of $(\Qc,w)$ replaces the variable~$X_k$ by a new variable, $X_k'$,
defined by the formula
\begin{equation}
\label{Mute}
X_kX_k'=
\prod\limits_{\substack{X_k\to X_j }}X_j
\quad+\quad
\left(1+
w_k\e\right)
\prod\limits_{\substack{X_i\to X_k }}X_i;
\end{equation}
the other variables remain unchanged.
\end{defn}

This is a particular case of the exchange relations defined in~\cite{SCl}.

\subsection{Laurent phenomenon}

The following version of Laurent phenomenon is proved in~\cite{SCl}
(this is the simplest case of Theorem~$1$).

\begin{thm}
\label{LeurThm}
For every weighted quiver~$(\Qc, w)$, all the 
functions $X_k', X_k'',\ldots$,
obtained recurrently by any series of consecutive mutations,
are Laurent polynomials
in the initial coordinates $\{X_1,\ldots,X_N\}$.
\end{thm}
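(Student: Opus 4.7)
The plan is to exploit the fact that $\e^2=0$, which lets us decompose the exchange relation~(\ref{Mute}) into two parts of different $\e$-degrees and reduce the statement largely to the classical Fomin--Zelevinsky Laurent phenomenon. Setting $X_k = x_k + y_k\e$, $X_k' = x_k' + y_k'\e$, and writing $p_+ + q_+\e = \prod_{X_k\to X_j} X_j$, $p_- + q_-\e = \prod_{X_i\to X_k} X_i$, a direct expansion of~(\ref{Mute}) gives
\begin{align*}
x_k x_k' &= p_+ + p_-,\\
x_k y_k' + y_k x_k' &= q_+ + q_- + w_k\, p_-.
\end{align*}
The first equation is the ordinary Fomin--Zelevinsky exchange relation on the underlying quiver $\Qc$, so the classical Laurent phenomenon~\cite{FZ1} immediately shows that every iterated mutation $x_k^{(m)}$ is a Laurent polynomial in $x_1,\dots,x_N$.

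The substance of the proof is then to control the $\e$-components $y_k^{(m)}$. I would argue by induction on the mutation depth that at every stage $y_k^{(m)}$ lies in the $\Z[x_1^{\pm 1},\dots,x_N^{\pm 1}]$-submodule generated by $\{1,y_1,\dots,y_N\}$; the constant $1$ must be allowed because the weights $w_k$ contribute inhomogeneous terms. The inductive step reduces to solving the second relation for $y_k'$ by a single division by $x_k^{(m)}$ and verifying that all denominators cancel so that the result is still a Laurent polynomial in the initial $X_i$'s.

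The main obstacle is precisely this last cancellation. It does \emph{not} follow formally from the $\e^{0}$-equation, because the $\e^{1}$-equation carries the extra term $w_k\,p_-$ which has no counterpart in the classical setting, and the denominator $x_k^{(m)}$ appearing at a generic stage must be shown to divide the whole right-hand side inside the ring of Laurent polynomials in the \emph{initial} $x_i$'s. To overcome this I would adapt the caterpillar-lemma method of Fomin--Zelevinsky, but applied to the coupled system $(x_i,y_i)$, tracking both components through each three-step caterpillar; the classical argument handles the $x$-component, and the $y$-component is driven by it through the linear-affine second equation above. An equivalent and more concise route is to observe that a weighted quiver encodes exactly a cluster superalgebra in the sense of~\cite{SCl} with two odd variables, so that the present statement is the simplest case of the super Laurent phenomenon proved there. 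Either approach---a direct caterpillar argument in the dual-number setting, or the invocation of the super-cluster-algebra framework---delivers the claim.
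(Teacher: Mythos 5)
The paper does not actually prove this theorem: it states it and defers entirely to \cite{SCl}, where it is the simplest case of Theorem~1. Your second, ``more concise route''---observing that a weighted quiver is exactly a cluster superalgebra with two odd variables in the sense of \cite{SCl} and invoking the super Laurent phenomenon proved there---is therefore precisely the paper's own argument, and on that route your proposal matches the paper.

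Your first route, the direct argument, is set up correctly but is not complete as written. The decomposition of the exchange relation into $\e$-components is right, and the observation that the $\e^0$-part reduces to the classical Fomin--Zelevinsky exchange relation (so that all $x_k^{(m)}$ are Laurent by \cite{FZ1}) is sound. But the entire content of the theorem is the cancellation you flag yourself: showing that $x_k^{(m)}$ divides $q_+ + q_- + w_k p_- - y_k^{(m)} x_k^{(m+1)}$ inside the ring of Laurent polynomials in the initial variables. You correctly identify that this does not follow formally from the $\e^0$-equation because of the inhomogeneous term $w_k p_-$, whose coefficient $w_k$ itself mutates; but you then assert that ``adapting the caterpillar lemma'' handles it without exhibiting the adapted coprimality and divisibility checks at the three corners of the caterpillar. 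Since the weight $w_k$ changes at each step according to the rule $\mu_k(w)(x_i)=w_i+[b_{ki}]_+w_k$, the verification that the modified exchange polynomials still satisfy the hypotheses of the caterpillar lemma of \cite{FZ2} is exactly the nontrivial work carried out in \cite{SCl}, and it is missing here. So the direct route, as presented, has a genuine gap at its central step; the citation route closes it and is what the paper itself relies on.
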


\begin{rem}
Note that Laurentness in $\{X_1,\ldots,X_N\}$ means
that the denominators are monomials in the variables $\{x_1,\ldots,x_N\}$,
while the variables $\{y_1,\ldots,y_N\}$ enter (linearly) into the numerators.
\end{rem}

\section{Pperiod~$1$ weighted quivers}\label{POnetSect}
Period~$1$ quivers were introduced and classified in~\cite{FM}.
These are quivers for which there exists a vertex such that
the quiver rotates under the mutation at this vertex.
More precisely, a period~$1$ quiver remains unchanged after the
mutation composed with the shift of the indices of the vertices $i\to{}i-1$.

In this section, we answer the question which period~$1$ quivers
have period~$1$ weight functions. 
A period~$1$ quiver equipped with a period~$1$ weight function
guarantees the integrality of sequence~$(b_n)_{n\in\N}$
defined by~(\ref{GenExtF}).

\subsection{Examples}
We start with simple examples.

\begin{ex}
\label{FiEx}
a) Consider the following weighted quiver with three vertices.
After mutation at~$x_1$ the quiver rotates, together with the weight function:
$$
\begin{array}{c}
 \xymatrix @!0 @R=1.3cm @C=0.9cm
 {
&x_1\ar@{->}[ld]\ar@{->}[rd]&\\
x_3&&x_2\ar@{->}[ll]
}
\\
w(x_1) = 1,
w(x_2) = 0,
w(x_3) = -1.
\end{array}
\qquad
\stackrel{\mu_1}{\Longrightarrow}
\qquad
\begin{array}{c}
 \xymatrix @!0 @R=1.3cm @C=0.9cm
 {
&x'_1\ar@{<-}[ld]\ar@{<-}[rd]&\\
x_3&&x_2\ar@{->}[ll]
}
\\
w(x'_1) = -1,
w(x_2) = 1,
w(x_3) = 0.
\end{array}
$$
We will say in such a situation, that the {\it weight function has period~$1$}.

b) On the other hand, for the quiver with the inverted orientation:
$$
 \xymatrix @!0 @R=1.3cm @C=0.9cm
 {
&x_1\ar@{<-}[ld]\ar@{<-}[rd]&\\
x_3&&x_2\ar@{<-}[ll]
}
$$
which is also of period~$1$,
there is no weight function of period~$1$. 
\end{ex}

\begin{ex}
\label{SeEx}
Similarly, for the following quivers of period~$1$ with four vertices and the opposite orientations:
$$
a) \xymatrix{
x_4\ar@{<-}[d]&
x_1\ar@{->}[d]\ar@{->}[l]\\
x_3&x_2\ar@{->}[l]
}
\qquad\hbox{and}\qquad
b) \xymatrix{
x_4\ar@{->}[d]&
x_1\ar@{<-}[d]\ar@{<-}[l]\\
x_3&x_2\ar@{<-}[l]
}
$$
the weight function
$w(x_1) = 1,w(x_2) = 0,w(x_3) =0, w(x_4) = -1,$
 has period~$1$ in the first case, and there is no such function
 in the second case.
 \end{ex}

\begin{ex}
\label{ThEx}
Another interesting example is the following Somos-4 quivers (cf.~\cite{FM} and~\cite{Mar}):
$$
a)  \xymatrix{
x_4\ar@<3pt>@{->}[rd]\ar@{->}[rd]\ar@{<-}[d]&
x_1\ar@<-3pt>@{<-}[ld]\ar@{<-}[ld]\ar@{->}[d]\ar@{->}[l]\\
x_3&x_2\ar@{->}[l]\ar@<-2pt>@{->}[l]\ar@<2pt>@{->}[l]
}
\qquad\hbox{and}\qquad
b) \xymatrix{
x_4\ar@<3pt>@{<-}[rd]\ar@{<-}[rd]\ar@{->}[d]&
x_1\ar@<-3pt>@{->}[ld]\ar@{->}[ld]\ar@{<-}[d]\ar@{<-}[l]\\
x_3&x_2\ar@{<-}[l]\ar@<-2pt>@{<-}[l]\ar@<2pt>@{<-}[l]
}
$$
Period~$1$ weight function exists for both choices of the
orientation:
$$
\begin{array}{cccc}
w(x_1) = 1,& w(x_2) = 0,& w(x_3) =0,& w(x_4) = -1,\\[2pt]
w(x_1) = 1,& w(x_2) = 1,& w(x_3) =-1,& w(x_4) = -1,
\end{array}
$$
respectively.
 \end{ex}

\subsection{Period~$1$ weight functions: criterion of existence}
Period~$1$ quivers were classified in~\cite{FM}.
The {\it primitive quiver} $P_N^{(t)}$, where $1\leq{}t\leq\frac{N}{2}$,
is a quiver with $n$ vertices and $n$ arrows, such that every vertex~$x_i$
is joined with the vertex~$x_{i+t}\;(\!\!\!\mod{N})$ where the indices are always
taken in the set $\{1,\ldots,N\}$.
The arrow is oriented from the vertex with the greater label to the vertex with the smaller label.
One then has for $P_N^{(t)}$:
$$
b_{ij}=\left\{
\begin{array}{rl}
-1, & j-i=t,\\
1, &i-j=t,\\
0, & \hbox{else}.
\end{array}
\right.
$$
For instance, the quiver considered in Examples~\ref{FiEx} b) and~\ref{SeEx} b)
are the quivers~$P_3^{(1)}$ and~$P_4^{(1)}$, respectively.

Given a quiver $\Qc$, the opposite quiver $-\Qc$ is obtained by reversing the orientation.
For example, the quivers in Examples~\ref{FiEx} a) and~\ref{SeEx} a)
are the quivers~$-P_3^{(1)}$ and~$-P_4^{(1)}$, respectively.
More generally, if $c\in\Z$, the quiver $c\,\Qc$ is obtained by 
multiplying the number of
arrows between every two vertices, $x_i$ and $x_j$ by $c$.
Finally, a sum of two quivers is obtained by superposition of their arrows.

It was proved in~\cite{FM}, that
every period~$1$ quiver can be obtained as a linear combination of
so-called primitive quivers and a correction term.
More precisely, let
$c_1,\ldots,c_r$ be arbitrary integers, 
where $r=\left[\frac{N}{2}\right]$.
A period~$1$ quiver is of the form:
\begin{equation}
\label{PerOne}
\Qc=c_1P_N^{(1)}+\cdots+c_rP_N^{(r)}+\Qc',
\end{equation}
where $\Qc'$ is a quiver with the vertices $x_2,\ldots,x_N$.
Since we will only consider the mutation at $x_1$, this ``correcting term''
$\Qc'$ will not change the exchange relations.
We thus omit the explicit form of $\Qc'$; see~\cite{FM} and~\cite{Mar}.

Let us use the notation 
$[c]_-=\left\{
\begin{array}{l}
c,c\leq0\\
0, \hbox{otherwise}.
\end{array}
\right.$

\begin{thm}
\label{Main}
Given a period~$1$ quiver $\Qc$,
there exists a period~$1$ weight function on $\Qc$
if and only if
\begin{equation}
\label{CondEq}
\begin{array}{rcll}
[c_1]_{-}+\cdots+[c_r]_{-} =1, &\hbox{if}& N &\hbox{is odd}\\[4pt]
2[c_1]_{-}+\cdots+2[c_{r-1}]_{-}+[c_r]_{-} =2, &\hbox{if}& N &\hbox{is even}.
\end{array}
\end{equation}
The period~$1$ weight function is unique up to an integer multiple.
\end{thm}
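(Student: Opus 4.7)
The plan is to translate the period-$1$ condition on the weight $w$ into a single linear constraint on $w_1$, and then evaluate that constraint using the primitive-quiver decomposition~(\ref{PerOne}). After mutating at $x_1$ and applying the shift $i\mapsto i-1$, the old vertex $x_i$ (for $i\ge 2$) is relabeled $x_{i-1}$ while the mutated $x_1'$ becomes $x_N$, so period~$1$ for $w$ amounts to the system
\[
w_{i-1}-w_i=[b_{1i}]_+\,w_1\ \ (i=2,\dots,N),\qquad w_N=-w_1.
\]
The first family is a triangular integer recursion determining $w_2,\dots,w_N$ from $w_1$; telescoping it yields $w_1-w_N=w_1\sum_{i=2}^N[b_{1i}]_+$, so combined with $w_N=-w_1$ either $w_1=0$ (forcing $w\equiv 0$) or
\[
\sum_{i=2}^{N}[b_{1i}]_+=2.
\]
Conversely, under this identity any integer $w_1$ produces an integer period-$1$ weight, so the set of such $w$ is a free rank-$1$ $\Z$-module generated by the solution with $w_1=1$; this yields the uniqueness clause.

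Next I would evaluate the sum using~(\ref{PerOne}). Since $\Qc'$ has no vertex~$x_1$, every arrow incident to $x_1$ is contributed by the primitive summands $c_tP_N^{(t)}$. In $P_N^{(t)}$ the neighbours of $x_1$ are $x_{1+t}$ and $x_{N+1-t}$, both of greater label, so both arrows are incoming: $b_{1,1+t}=b_{1,N+1-t}=-1$. Weighting by $c_t$ gives $b_{1,1+t}=-c_t$ in $\Qc$, hence $[b_{1,1+t}]_+=[c_t]_-$, and similarly at the index $N+1-t$. For $N$ odd the $2r=N-1$ indices $\{1+t,\,N+1-t\}_{t=1}^r$ are pairwise distinct in $\{2,\dots,N\}$ (since $t+t'\le N-1<N$), so the identity reads $2([c_1]_-+\cdots+[c_r]_-)=2$. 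For $N$ even these two indices collide at $t=r=N/2$ and $P_N^{(N/2)}$ contributes only one arrow there, so the identity becomes $2([c_1]_-+\cdots+[c_{r-1}]_-)+[c_r]_-=2$. Both cases recover~(\ref{CondEq}) exactly.

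The only step I expect to require real care is this combinatorial bookkeeping at $t=N/2$ in the even case, where the ``forward'' and ``backward'' neighbours of $x_1$ in $P_N^{(N/2)}$ coincide: one must confirm that a single edge (rather than two) is contributed, which is precisely what produces the asymmetric coefficient of $[c_r]_-$ in~(\ref{CondEq}). The rest of the argument is a single telescoping identity plus the observation that~$\Qc'$ decouples from the $x_1$ row of the exchange matrix.
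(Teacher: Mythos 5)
Your proof follows the same route as the paper's own: derive from periodicity the linear system $w_{i-1}-w_i=[b_{1i}]_+w_1$, $w_N=-w_1$, telescope it to obtain the necessary and sufficient condition $\sum_{i=2}^{N}[b_{1i}]_+=2$ together with uniqueness up to an integer multiple, and then translate this condition via the primitive decomposition~(\ref{PerOne}); your bookkeeping at the collision of the two neighbours of $x_1$ when $t=N/2$ is in fact more explicit than the paper's. The only caveat is a sign convention you share with the paper's proof: with the literal definition $[c]_-=\min(c,0)$ one has $[-c_t]_+=-[c_t]_-$ rather than $[c_t]_-$, so both arguments (and the statement of~(\ref{CondEq})) implicitly read $[c]_-$ as $\max(-c,0)$ --- this does not affect the substance of your argument.
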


\begin{proof}
Assume that a period~$1$ quiver $\Qc$ admits a period~$1$ weight function~$w$.
By definition~\ref{TheMainDef},
the mutation at the first vertex, $\mu_1$, transforms the weight function as follows:
$$
\begin{array}{rcl}
w_1 &\mapsto & -w_1,\\[2pt]
w_i &\mapsto & w_i+[b_{1i}]_{+}w_1,
\end{array}
$$
for all $1\leq{}i\leq{}N$.
Since $w$ is of period~$1$, this implies the following system of linear equations:
$$
\begin{array}{rcl}
w_n &= & -w_1,\\[2pt]
w_1 &= & w_2+[b_{12}]_{+}w_1,\\[2pt]
w_2 &= & w_3+[b_{13}]_{+}w_1,\\
\cdots&&\\
w_{N-1} &= & w_n+[b_{1N}]_{+}w_1,
\end{array}
$$
that has (a unique) solution if and only if the following condition is satisfied:
$$
[b_{12}]_{+}+\cdots+[b_{1N}]_{+}=2.
$$

Finally, from~(\ref{PerOne}), one has $[b_{1i}]_{+}=[c_{i-1}]_{-}$,
if~$i\leq{}r$ and $[b_{1i}]_{+}=[c_{N-i+1}]_{-}$,
if~$i\geq{}r$.
The above necessary and sufficient condition for the existence of the
function $w$ then coincides with~(\ref{CondEq}).
\end{proof}

\section{Applications to integer sequences}

We apply the above constructions to integer sequences.

\subsection{The general method}\label{GenM}
Given a weighted quiver $(\Qc,w)$ of period~$1$,
by Theorem~\ref{LeurThm}, performing an infinite series
of consecutive mutations: $\mu_1,\mu_2,\ldots$, one obtains a sequence of
Laurent polynomials $(X_n)_{n\in\N}$ in the initial variables
$\{X_1,\ldots,X_N\}$.
This sequence satisfies the recurrence:
$$
X_{n+N}X_n=
\prod_{1\leq{}i\leq{}N-1}X_{n+i}^{[b_{1i}]_+}\left(1+w_1\e\right)
+\prod_{1\leq{}i\leq{}N-1}X_{n+i}^{[b_{1i}]_-}.
$$

Recall that $X_i=x_i+y_i\e$.
Choosing the initial conditions
$(x_1,\ldots,x_N):=(1,\ldots,1)$ and arbitrary integers
$(y_1,\ldots,y_N):=(b_1,\ldots,b_N)$,
one obtains a sequence $(A_n)_{n\in\N}$, where
$A_n=a_n+b_n\e$.
The constructed integer sequence $(b_n)_{n\in\N}$
is the desired extension of $(a_n)_{n\in\N}$.

Let us give further examples.

\subsection{Sequences of order 2}\label{FiboSect}

We illustrate the idea of substitution of dual numbers
into recurrences on a very simple classic example.

Consider the classic Fibonacci numbers
$(F_n)=1,\,1,\,2,\,3,\,5,\,8,\,13,\,21,\,34,\,55,\,89,\,144,\,233,\,377,\ldots$
and let us split $(F_n)$ into two subsequences:
$$
a_n:=F_{2n-1},
\qquad
\tilde a_n:=F_{2n}.
$$
Both of them satisfy quadratic recurrences that differ by a sign:
\begin{equation}
\label{ClassCass}
a_{n+2}a_n=a_{n+1}^2+1,
\qquad
\tilde a_{n+2}\tilde a_n=\tilde a_{n+1}^2-1.
\end{equation}
The above recurrences
are known as ``Cassini's identity''.
The initial conditions for these sequences are:
$(a_0,a_1)=(1,1)$ and $(\tilde a_0,\tilde a_1)=(0,1)$.

We will consider the sequences of dual numbers: 
$$
A_n:=a_n+b_n\e,
\qquad
\tilde A_n:=\tilde a_n+\tilde b_n\e,
$$
with the recurrence relations generalizing~(\ref{ClassCass}).

\subsection*{Linearization: dual Fibonacci and Lucas numbers}\label{FiboLucSect}

Suppose that $(A_n)$ and $(\tilde A_n)$ satisfy the similar recurrences:
\begin{equation}
\label{SupCass}
A_{n+2}A_n=A_{n+1}^2+1,
\qquad
\tilde A_{n+2}\tilde A_n=\tilde A_{n+1}^2-1.
\end{equation}
Equivalently, the sequences $(a_n)$ and $(\tilde a_n)$ are as above,
and $(b_n)$ and $(\tilde b_n)$ are defined by:
\begin{equation}
\label{LiF}
b_{n+2}a_n=
2b_{n+1}a_{n+1}-b_{n}a_{n+2},
\qquad
\tilde b_{n+2}\tilde a_n=
2\tilde b_{n+1}\tilde a_{n+1}-\tilde b_{n}\tilde a_{n+2}.
\end{equation}
The sequence $(b_n)$ is integer for an arbitrary choice of integral initial conditions 
$(b_0,b_1)=(p,q)$.

It turns out that the classic Lucas numbers naturally appear
in the ``dual Fibonacci'' sequences.

\begin{prop}
\label{FiboLucProp}
The sequences of odd (reps. even) Lucas numbers:
$$
b_n=L_{2n-1},
\qquad
\tilde b_n=L_{2n},
$$ 
satisfy the recurrence~(\ref{SupCass}).
\end{prop}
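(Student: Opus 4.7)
The plan is to reduce both assertions to a single classical identity relating Fibonacci and Lucas numbers. Writing $\phi,\psi=(1\pm\sqrt{5})/2$ for the roots of $x^2=x+1$, Binet's formulas give $F_n=(\phi^n-\psi^n)/\sqrt{5}$ and $L_n=\phi^n+\psi^n$, together with $\phi\psi=-1$. From these, a one-line expansion, in which the cross terms $\phi^m\psi^n$ and $\psi^m\phi^n$ cancel between the two products, yields the symmetric identity $F_m L_n + L_m F_n = 2\,F_{m+n}$ for all $m,n\in\Z$.

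Next, I would substitute $a_n=F_{2n-1}$ and $b_n=L_{2n-1}$ into the first linearized recurrence of \eqref{LiF}. This converts the claim into
\[
L_{2n+3}F_{2n-1}+L_{2n-1}F_{2n+3}=2\,L_{2n+1}F_{2n+1}.
\]
Applying the symmetric identity to the left-hand side with the index pair $(2n-1,\,2n+3)$, and to the right-hand side with the index pair $(2n+1,\,2n+1)$, collapses both sides to $2F_{4n+2}$. The even-index case is formally identical: with $\tilde a_n=F_{2n}$ and $\tilde b_n=L_{2n}$, the second recurrence in \eqref{LiF} reduces to the tautology $2F_{4n+4}=2F_{4n+4}$.

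The main obstacle is essentially nonexistent once the symmetric identity is in hand; the only delicate point is the bookkeeping of shifted Fibonacci and Lucas subscripts. It is worth remarking that the same argument handles the $+1$ and $-1$ Cassini variants \eqref{SupCass} simultaneously, because the inhomogeneous constants $\pm 1$ vanish under linearization and the two instances of \eqref{LiF} have exactly the same form. One could alternatively argue by noting that the space of solutions of each linearized recurrence is two-dimensional and checking two initial values, but the Binet route gives a structural reason rather than a numerical coincidence.
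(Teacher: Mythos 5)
Your proposal is correct and follows essentially the same route as the paper: the paper's proof also reduces the claim to the linearized recurrences \eqref{LiF} and verifies them directly from Binet's formulas for $F_n$ and $L_n$. Your isolation of the identity $F_mL_n+L_mF_n=2F_{m+n}$ simply makes the paper's ``one checks directly'' step explicit, and the index bookkeeping ($2F_{4n+2}$ and $2F_{4n+4}$ on both sides) is accurate.
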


\begin{proof}
Using the explicit formulas for the Fibonacci and Lucas numbers
$$
F_n=\frac{\varphi^n-(-\varphi)^{-n}}{\sqrt5},
\qquad
L_n=\varphi^n+(-\varphi)^{-n},
$$
where $\varphi$ is the golden ratio,
one checks directly that the recurrences~(\ref{LiF}) are satisfied.
\end{proof}

The Lucas solutions to~(\ref{SupCass}) start as follows:
$$
\begin{array}{r|r|c|c|c|c|c|c|cc}
n&0&1&2&3&4&5&6&7&\cdots\\[2pt]
\hline
a_n&1&1&2&5&13&34&89&233&\cdots\\[2pt]
\hline
b_n&-1&1&4&11&29&76&199&521&\cdots
\end{array}
\qquad
\begin{array}{r|c|c|c|c|c|c|c|cc}
n&0&1&2&3&4&5&6&7&\cdots\\[2pt]
\hline
\tilde a_n&0&1&3&8&21&55&144&377&\cdots\\[2pt]
\hline
\tilde b_n&2&3&7&18&47&123&322&843&\cdots
\end{array}
$$

Furthermore, one has a $2$-parameter family of solutions to
the recurrence~(\ref{SupCass}):
$$
\begin{array}{r|r|c|c|c|c|c|c|cc}
n&0&1&2&3&4&5&6&7&\cdots\\[2pt]
\hline
a_n&1&1&2&5&13&34&89&233&\cdots\\[2pt]
\hline
b_n&-p&q&2p+2q&8p+3q&27p+2q&86p-10q&265p-66q&798p-277q&\cdots
\end{array}
$$
The situation is more complicated for the sequence $(\tilde b_n)_{n\in\Z}$.
Arbitrary initial conditions $(\tilde b_0,\tilde b_1)$ do not lead to
an integer sequence.
But one obtains a two-parameter family of integer sequences by
choosing the initial conditions $(\tilde b_1,\tilde b_2)=(3p,q)$
with arbitrary $p$ and $q$.
$$
\begin{array}{r|c|c|c|c|c|c|c|cc}
n&1&2&3&4&5&6&7&\cdots\\[2pt]
\hline
\tilde a_n&1&3&8&21&55&144&377&\cdots\\[2pt]
\hline
\tilde b_n&3p&q&6q-24p&25q-128p&90q-507p&300q-1778p&954q-5835p&\cdots
\end{array}
$$
Note that the sequence of coefficients of $q$ is A001871.

\subsection*{``Limping'' Fibonacci sequence}

The classic odd Fibonacci sequence $a_n=F_{2n-1}$ satisfies the first recurrence in~(\ref{ClassCass}).
It can be generated by consecutive mutations
$\mu_0,\mu_1,\mu_0,\mu_1,\ldots$ of the quiver
with two vertices and two arrows:
$$
 \xymatrix{
x_0&x_1\ar@2{->}[l]
},
$$
called the $2$-{\it Kronecker quiver}.
Clearly, there is no period~$1$ weight function $w$,
but the function $w\equiv1$ has period~$4$.

The sequence of consecutive mutations at vertices $x_1,x_2,x_1,\ldots$
then leads to the recurrence
\begin{equation}
\label{SupCassAlt}
A_{n+2}A_n=A_{n+1}^2\left(1+(-1)^{\frac{(n+1)(n+2)}{2}}\e\right)+1.
\end{equation}
More precisely, $(b_n)_{n\in\Z}$ satisfies
$$
b_{n+2}a_n+b_{n}a_{n+2}=
2b_{n+1}a_{n+1}+(-1)^{\frac{(n+1)(n+2)}{2}}a_{n+1}^2.
$$
Let us consider the initial conditions $a_0=a_1=1$ and $b_0=b_1=0$.

It turns out that the sequence $(b_n)_{n\in\N}$ also consists of Fibonacci numbers,
but this time with even indices, and taken in a surprising order:
$$
\begin{array}{r|c|c|c|c|c|c|c|cc}
n&0&1&2&3&4&5&6&7&\cdots\\[2pt]
\hline
a_n&1&1&2&5&13&34&89&233&\cdots\\[2pt]
\hline
b_n&0&0&1&8&21&21&55&377&\cdots
\end{array}
$$
More precisely, 
$$
b_n=\left\{
\begin{array}{ll}
F_{2n},&n\equiv0,3\mod4,\\
F_{2n-2},&n\equiv1,2\mod4.
\end{array}
\right.
$$

\subsection{Sequences of order $3$}

Consider the sequence A005246 satisfying the recurrence
\footnote{Note that, unlike the Somos sequences, this sequence 
also satisfies a linear recurrence: 
$a_{n+4}=4a_{n+2}-a_n.$}

$$
a_{n+3}a_n=a_{n+2}a_{n+1}+1,
$$
and starting as follows:
$
a_n=1, 1, 1, 2, 3, 7, 11, 26, 41, 97, 153, 362, 571, 1351, \ldots
$
This sequence  can be generated by the period~$1$ quivers
$$
a)\xymatrix @!0 @R=1.3cm @C=0.9cm
 {
&x_1\ar@{->}[ld]\ar@{->}[rd]&\\
x_3&&x_2\ar@{->}[ll]
}\qquad\hbox{and}\qquad
b) \xymatrix @!0 @R=1.3cm @C=0.9cm
 {
&x_1\ar@{<-}[ld]\ar@{<-}[rd]&\\
x_3&&x_2\ar@{<-}[ll]
}
$$
already considered in Example~\ref{FiEx}.

By Theorem~\ref{Main}, the first quiver has a period~$1$
weight function, but not the second one.
Therefore, the sequence $(b_n)_{n\in\N}$ defined by the recurrence
$$
A_{n+3}A_n=A_{n+2}A_{n+1}+1+w\,\e
$$
and arbitrary integer initial conditions is integer.
Our results do not give any information about the sequence $(b_n)_{n\in\N}$
satisfying
$$
A_{n+3}A_n=A_{n+2}A_{n+1}\left(1+\e\right)+1,
$$
but numerical experiments show that it is not integer.

However, consider again the quiver b).
The weight function $w(x_1)=w(x_2)=w(x_3)=1$ is of period~$6$.
Indeed, after three consecutive mutations $\mu_3\circ\mu_2\circ\mu_1$,
the function $w$ changes its sign and becomes
$w(x'_1)=w(x'_2)=w(x'_3)=-1$, while the quiver remains unchanged.
Therefore, the recurrence
\begin{equation}
\label{Alt3}
A_{n+3}A_n=A_{n+2}A_{n+1}\left(1+(-1)^\frac{(n+1)(n+2)(n+3)}{6}\e\right)+1
\end{equation}
defines integer sequences $(b_n)_{n\in\N}$.
Note that the exponent is chosen to obtain the sign sequence
$+,+,+,-,-,-,+,+,+,\ldots$
Written more explicitly, $(b_n)_{n\in\N}$ satisfies the non-linear recurrence
$$
b_{n+3}a_n=b_{n+2}a_{n+1}+b_{n+1}a_{n+2}-b_na_{n+3}+
(-1)^\frac{(n+1)(n+2)(n+3)}{6}a_{n+2}a_{n+1}.
$$
For example, zero initial conditions lead to the following sequence
$$
b_n=0,0,0,1,3,15,17,43,2,112,84,\ldots
$$

\subsection{Non-homogeneous Somos-$4$ sequence}\label{GenS4Sect}
Let $p$ and $q$ be positive integers,
and consider the sequence $(a_n)_{n\in\N}$ defined by the recurrence
$$
a_{n+4}a_n=a_{n+3}^pa_{n+1}^p+a_{n+2}^q
$$
and the initial conditions $a_0=a_1=a_2=a_3=1$.
This sequence was considered in~\cite{Gal}; see also~\cite{FM}.

\begin{cor}
\label{SomExThm}
(i) The sequence $(b_n)_{n\in\N}$, defined by the recurrence
\begin{equation}
\label{SSecGen1}
A_{n+4}A_n=A_{n+3}A_{n+1}+A_{n+2}^q\left(1+\e\right)
\end{equation}
with arbitrary integer initial conditions $(b_1,b_2,b_3,b_4)$,
is integer.

(ii) The sequence $(b_n)_{n\in\N}$, defined by the recurrence
\begin{equation}
\label{SSecGen2}
A_{n+4}A_n=A_{n+3}^pA_{n+1}^p\left(1+\e\right)+A_{n+2}
\end{equation}
with arbitrary integer initial conditions $(b_1,b_2,b_3,b_4)$,
is integer.
\end{cor}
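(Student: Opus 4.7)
The plan is to apply the general method of Section~\ref{GenM}: realize each recurrence as the one produced by iterated mutation at $x_1$ of a suitable weighted period-$1$ quiver $(\Qc, w)$, invoke Theorem~\ref{LeurThm}, and specialize to the initial conditions $(x_1, \ldots, x_4) = (1, \ldots, 1)$.

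For part~(i) (the case $p=1$), I would take $\Qc$ on $\{x_1, x_2, x_3, x_4\}$ with outgoing arrows $x_1 \to x_2$, $x_1 \to x_4$ and a $q$-fold incoming arrow $x_3 \to x_1$, completed by the Fordy--Marsh correction term on $\{x_2, x_3, x_4\}$ that makes $\Qc$ period-$1$~\cite{FM}. The weight function with $w_1 = 1$ is period-$1$ by Theorem~\ref{Main}, since $[b_{12}]_+ + [b_{13}]_+ + [b_{14}]_+ = 1 + 0 + 1 = 2$ verifies the criterion~(\ref{CondEq}). The exchange relation~(\ref{Mute}) then produces exactly the recurrence of part~(i), because $A_{n+2}^q$ is the incoming product at $x_1$ and hence picks up the $(1 + w_1 \e) = (1 + \e)$ factor.

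With $(\Qc, w)$ in place, Theorem~\ref{LeurThm} tells us that every $X_n$ arising from the mutation sequence is a Laurent polynomial in $X_1, \ldots, X_4$, whose denominators are monomials in $x_1, \ldots, x_4$ and in whose numerators $y_1, \ldots, y_4$ appear only linearly (cf.\ the remark after Theorem~\ref{LeurThm}). Specializing to $x_i = 1$ and $y_i = b_i \in \Z$ makes the denominators equal to $1$ and the numerators into integer polynomials in $(b_1, b_2, b_3, b_4)$; writing $A_n = a_n + b_n\e$, this yields $a_n, b_n \in \Z$.

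Part~(ii) is to be handled analogously, using the quiver with $p$-fold \emph{incoming} arrows $x_2 \to x_1$ and $x_4 \to x_1$ and a single \emph{outgoing} arrow $x_1 \to x_3$, so that $A_{n+3}^p A_{n+1}^p$ is the incoming product that acquires the $(1+\e)$ factor in~(\ref{Mute}). The main obstacle here is that the sum $[b_{12}]_+ + [b_{13}]_+ + [b_{14}]_+ = 0 + 1 + 0 = 1$ fails the sufficient condition~(\ref{CondEq}) of Theorem~\ref{Main}, so the period-$1$ weight function argument does not apply directly. To overcome this I would invoke the more general cluster-superalgebra Laurent phenomenon of~\cite{SCl} referred to in Section~\ref{WExSect}, which requires only period-$1$ of the underlying quiver $\Qc$ (guaranteed by~\cite{FM}) and still yields Laurent polynomials with the denominator/numerator structure needed to run the specialization step from part~(i) verbatim.
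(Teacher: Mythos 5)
Your part (i) is correct and is essentially the paper's own argument: the paper takes the first quiver of~(\ref{SomQui}) (with $p$-fold arrows $x_1\to x_2$, $x_1\to x_4$, $q$-fold arrows $x_3\to x_1$, and the explicit $p(q+1)$-fold correction $x_2\to x_3$ that makes it rotate under $\mu_1,\mu_2,\ldots$), and applies Theorem~\ref{Main}; the criterion $[b_{12}]_++[b_{13}]_++[b_{14}]_+=2p=2$ forces $p=1$, exactly as in your computation, after which the specialization $x_i=1$, $y_i=b_i$ gives integrality.

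Part (ii), however, has a genuine gap. Your fallback claim --- that the cluster-superalgebra Laurent phenomenon of~\cite{SCl} ``requires only period~$1$ of the underlying quiver'' and therefore still applies when no period-$1$ weight function exists --- is not correct. Theorem~\ref{LeurThm} guarantees Laurentness of whatever sequence the mutations of $(\Qc,w)$ actually produce; but the exchange relation~(\ref{Mute}) at step $n$ carries the factor $\bigl(1+w_1^{(n)}\e\bigr)$, where $w_1^{(n)}$ is the \emph{current} weight at the mutated vertex. If the weight function is not period-$1$, these weights drift from step to step, the mutation sequence does not satisfy the constant-coefficient recurrence~(\ref{SSecGen2}), and Laurentness cannot be transferred to the sequence that recurrence defines. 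Indeed, the example immediately following the corollary in the paper exhibits $b_8=\frac{159}{2}$ for $A_{n+4}A_n=A_{n+3}^pA_{n+1}^p(1+\e)+A_{n+2}^q$ with $q=1$ --- precisely the exponent printed in~(\ref{SSecGen2}) --- so no argument can close this gap for the statement as literally written. The paper's proof instead uses the second (orientation-reversed) quiver of~(\ref{SomQui}), for which the criterion of Theorem~\ref{Main} reads $[b_{13}]_+=q=2$: the corollary is intended with $A_{n+2}^2$ in~(\ref{SSecGen2}) (the printed exponent is a typo). With $q=2$ your own quiver for part (ii) does satisfy~(\ref{CondEq}), and the period-$1$ weight-function argument of part (i) then goes through verbatim, with $A_{n+3}^pA_{n+1}^p$ as the incoming product acquiring the $(1+\e)$ factor.
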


\begin{proof}
Following~\cite{FM}, consider the quivers
(that differ only by orientation):
\begin{equation}
\label{SomQui}
 \xymatrix@!0 @R=2cm @C=2cm
 {
x_4\ar@{->}[rd]^<<<<<<q\ar@{<-}[d]_p&
x_1\ar@{<-}[ld]^>>>>>>>q\ar@{->}[d]^p\ar@{->}[l]_p\\
x_3&x_2\ar@{->}[l]^{p(q+1)}
}
\qquad\hbox{and}\qquad
 \xymatrix@!0 @R=2cm @C=2cm
 {
x_4\ar@{<-}[rd]^<<<<<<q\ar@{->}[d]_p&
x_1\ar@{->}[ld]^>>>>>>>q\ar@{<-}[d]^p\ar@{<-}[l]_p\\
x_3&x_2\ar@{<-}[l]^{p(q+1)}
}
\end{equation}
where the labels $p,q$ and $p(q+1)$ stand for the number of arrows.
Each of them rotates under the series of consecutive mutations
$\mu_1,\mu_2,\mu_3,\ldots$
For instance,
$$
 \xymatrix@!0 @R=2cm @C=2cm
 {
x_4\ar@{->}[rd]^<<<<<<q\ar@{<-}[d]_p&
x_1\ar@{<-}[ld]^>>>>>>>q\ar@{->}[d]^p\ar@{->}[l]_p\\
x_3&x_2\ar@{->}[l]^{p(q+1)}
}
\qquad
\stackrel{\mu_1}{\Longrightarrow}
\qquad
 \xymatrix@!0 @R=2cm @C=2cm
 {
x_4\ar@{->}[rd]^<<<<<<q\ar@{<-}[d]_{p(q+1)}&
x_1'\ar@{->}[ld]^>>>>>>>q\ar@{<-}[d]^p\ar@{<-}[l]_p\\
x_3&x_2\ar@{->}[l]^{p}
}
$$
This is straightforward from the definition of quiver mutations
(and similarly for the twin quiver).

By Theorem~\ref{Main}, the first of the quivers~(\ref{SomQui})
has a weight function of period~$1$, if (and only if) $p=1$,
while the second quiver has a weight function of period~$1$, if (and only if) $q=2$.
\end{proof}

Note that our results do not imply the converse statement.
However, we conjecture that the sequence $(b_n)_{n\in\N}$, defined by the
recurrence $A_{n+4}A_n=A_{n+3}^pA_{n+1}^p+A_{n+2}^q\left(1+\e\right)$,
is integer {\it if and only if} $p=1$ (and similarly for the second case).
This conjecture is confirmed by the following examples.

\begin{ex}
Let us now consider the sequence~$(A_n)_{n\in\N}$ 
satisfying the recurrence
$A_{n+4}A_n=A_{n+3}^pA_{n+1}^p\left(1+\e\right)+A_{n+2}^q$,
with initial conditions: $b_0=b_1=b_2=b_3=0$,
and take~$q\not=2$.
Although Theorem~\ref{Main} does not imply non-integrality of~$(b_n)_{n\in\N}$,
this sequence is not integer in all the examples we considered.

a) If $q=0$, then
the sequence starts as follows: 
$$
\begin{array}{r|r|c|c|c|c|c|c|c|c|cc}
n&0&1&2&3&4&5&6&7&8&9&\cdots\\[2pt]
\hline
a_n&1&1&1&1&2&3&4&9&14&19&\cdots\\[2pt]
\hline
b_n&0&0&0&0&1&3&6&24&56&\frac{307}{3}&\cdots
\end{array}
$$

b) If $q=1$,
then the sequence stops to be integer one step earlier: 
$$
\begin{array}{r|r|c|c|c|c|c|c|c|cc}
n&0&1&2&3&4&5&6&7&8&\cdots\\[2pt]
\hline
a_n&1&1&1&1&2&3&5&13&22&\cdots\\[2pt]
\hline
b_n&0&0&0&0&1&3&7&32&\frac{159}{2}&\cdots
\end{array}
$$

c) If $q=3$,
then, the sequence is: 
$$
\begin{array}{r|r|c|c|c|c|c|c|c|cc}
n&0&1&2&3&4&5&6&7&8&\cdots\\[2pt]
\hline
a_n&1&1&1&1&2&3&11&49&739&\cdots\\[2pt]
\hline
b_n&0&0&0&0&1&3&18&150&\frac{6539}{2}&\cdots
\end{array}
$$
\end{ex}

This and many other experimental computations
illustrate a sophisticated and fragile nature
of the Laurent phenomenon of Theorem~\ref{LeurThm}.
It seems to occur only when there is a weighted function with period~$1$.

\subsection{Conclusion and an open problem}
The properties of the constructed integer sequences remain unexplored.
In many cases, we cannot prove their positivity 
(although this was checked numerically for the most interesting examples), and
their asymptotics are unknown.

A very interesting property of the Somos-type sequences is their relation to
discrete integrable systems; see~\cite{FH} and references therein.
(The properties of ``integrality'' and ``integrability'' are related not only phonetically!)
It will be interesting to investigate integrability of discrete dynamical systems
related to the sequences constructed in this paper.
For example, is the following map on~$\R^8$ 
$$
\left(
\begin{array}{c}
x_1\\[2pt]
x_2\\[2pt]
x_3\\[2pt]
x_4\\[2pt]
y_1\\[2pt]
y_2\\[2pt]
y_3\\[2pt]
y_4
\end{array}
\right)\longmapsto
\left(
\begin{array}{c}
x_2\\[2pt]
x_3\\[2pt]
x_4\\[2pt]
\left(x_4x_2+x_3^2\right)/x_1\\[2pt]
y_2\\[2pt]
y_3\\[2pt]
y_4\\[4pt]
\left(x_2y_4+2x_3y_3+x_4y_2+x_3^2\right)/x_1-
y_1\left(x_4x_2+x_3^2\right)/x^2_1
\end{array}
\right)
$$
 that arises from the extended
Somos-4 recurrence (see Section~\ref{S4Sect}) completely integrable?

\bigskip
{\bf Acknowledgements}.
This paper was initiated by discussions with Michael Somos; we are
indebted to him for many fruitful comments and a computer program.
We are grateful to Gregg Musiker and Michael Shapiro for enlightening discussions.
This paper was completed when the first author was a Shapiro visiting professor
at Pennsylvania State University, V.O. is grateful to Penn State for its hospitality.
S.T. was partially supported by the NSF Grant DMS-1510055.

\end{document}